\documentclass[10pt,a4paper]{article}
\usepackage[latin1]{inputenc}
\usepackage[margin=3cm]{geometry}
\usepackage{amsmath}
\usepackage{amsfonts}
\usepackage{stmaryrd}
\usepackage{amssymb}
\usepackage{multirow}
\usepackage{amsthm}
\usepackage{enumerate}
\usepackage[table]{xcolor}
\usepackage{hhline}
\usepackage{makecell}
\usepackage{tikz}
\usepackage{ifthen}
\usepackage{subfiles}
\usepackage{accents}
\usepackage{mathrsfs}
\newenvironment{tproof}{\begin{proof}[\textbf{Proof}]}{\end{proof}}
\newlength{\dhatheight}




\newcommand{\Aut}[1]{\mathrm{Aut}_{#1}}

\newcommand{\Lie}[1]{\mathrm{Lie}(#1)}

\newcommand{\mult}[2]{\mathrm{mult}(#1,#2)}

\newcommand{\exchange}[2]{\mathrm{ue}\left(#1,#2\right)}


\newcommand{\kkk}{\mathfrak{k}}

\newcommand{\AAA}[1][]{\mathcal{A}_{\kkk#1}}

\newcommand{\BBnk}[1][]{\mathcal{B}_{#1}}

\newcommand{\UUU}[1]{\mathcal{N}_{#1}}

\newcommand{\s}{\setminus}


\newcommand{\Pri}[1][]{\mathcal{C}_{\kkk#1}}





\newcommand{\QQ}{\mathrm{Q}}

\newcommand{\F}{\mathrm{F}}

\newcommand{\Z}{\mathrm{Z}}

\newcommand{\Om}{\mathcal{O}}


\newcommand{\Omm}[1]{\mathcal{O}_{\mathbf{f}}(#1)}



\newcommand{\A}{\mathbb{A}_{\kkk}}

\newcommand{\e}{\text{e}}

\newcommand{\di}{6}

\newcommand{\ci}[4][\di]{\ifthenelse{\equal{#4}{a}}{\draw (#2-0.5,#1+0.5-#3)[gray!70,fill=gray!70]circle(0.17);}{\ifthenelse{\equal{#4}{b}}{\fill (#2-0.5,#1+0.5-#3)circle(0.2);}{\ifthenelse{\equal{#4}{c}}{\draw (#2-0.5,#1+0.5-#3)[line width=1pt,gray!70]circle(0.2);}{}} }\ifthenelse{\equal{#4}{d}}{\draw (#2-0.5,#1+0.5-#3)[line width=1pt]circle (0.2);}{\ifthenelse{\equal{#4}{e}}{\draw[line width=1pt,gray!70](#2-0.7,#1+0.3-#3)--(#2-0.3,#1+0.3-#3)--(#2-0.5,#1+0.8-#3)--(#2-0.7,#1+0.3-#3);}{\ifthenelse{\equal{#4}{f}}{\draw[line width=1pt](#2-0.7,#1+0.3-#3)--(#2-0.3,#1+0.3-#3)--(#2-0.5,#1+0.8-#3)--(#2-0.7,#1+0.3-#3); }}}} 

\usetikzlibrary{patterns}

\newcounter{row}
\setcounter{row}{1}


\newcommand{\chh}[1][]{}

\newcommand{\Di}[1]{\mathrm{dim}\left(#1\right)} 

\newcommand{\X}[2][]{\mathbf{X}_{#1}(#2)}


\newcommand{\vsp}{\vspace{1,5mm}}
\newtheorem{theorem}{Theorem}
\newtheorem{expectation}[theorem]{Expectation}

\newtheorem{prop}[theorem]{Proposition}

\newtheorem{lem}[theorem]{Lemma}

\theoremstyle{definition}

\theoremstyle{remark}

\DeclareFontFamily{U}{mathb}{\hyphenchar\font45}
\DeclareFontShape{U}{mathb}{m}{n}{
      <5> <6> <7> <8> <9> <10> gen * mathb
      <10.95> mathb10 <12> <14.4> <17.28> <20.74> <24.88> mathb12
      }{}
\DeclareSymbolFont{mathb}{U}{mathb}{m}{n}


\DeclareMathSymbol{\act}{3}{mathb}{'374}
\DeclareMathSymbol{\ssquare}{3}{mathb}{"05}
\newtheoremstyle{subdefi}{1pt}{1pt}{}{}{\bfseries}{.}{.5em}{}
\theoremstyle{subdefi}
  
   \newenvironment{subdef*}
     {\pushQED{\qed}\axbxxxt}
     {\popQED\endaxbxxxt}

\begin{document}
\title{A Note on a Fourier Expansion of Automorphic Functions that Nontrivially Involves the Euclidean Algorithm}
\date{}
\author{Eleftherios Tsiokos}
\maketitle
\begin{abstract}We extend the Fourier expansion of automorphic functions of Piateski-Shapiro and Shalika in a way that the Euclidean algorithm appears nontrivially. Then, we discuss a potential application in the Rankin-Selberg method.
 
\end{abstract}
Let $\kkk$ be a number field,  $\A$ be its adele ring, and $n$ be a positive integer. We define $GL_n(\A)$-automorphic forms and $GL_n(\A)$-automorphic representations in a standard way, exactly as we did in \cite{Tsiokos2} (in 9.2). Additionally, by a $P(\A)$-automorphic function, for a standard parabolic subgroup $P$ of $GL_n$ , we mean a function with domain $P(\kkk)\s P(\A) $ which is of uniform moderate growth, smooth, and $K\cap P(\A)$-finite.

By an additive function [always abbreviated by ``AF"], say $\F$, we mean an algebraic additive function on a unipotent algebraic group, and we denote by $D_\F$ this group. For $\gamma$ being a matrix in the general linear group to which we realize $D_\F$ we define by $\gamma\F$ to be the AF given by $\gamma\F(n)=\F(\gamma^{-1}n\gamma)$ for all $n\in D_{\gamma\F}=\gamma D_{\F}\gamma^{-1}$. The restriction of $\F$ in a group $L$ is denoted by $\F|_L$. We define $\psi_\kkk$ to be a unitary character of $\kkk\s \A$. If $\phi$ is an automorphic function with domain containing $D_\F$ we denote by $\F(\phi)$ the function with the same domain as $\phi$ given by $\F(\phi)(g):=\int_{D_\F(\kkk)\s D_\F(\A)}\phi(ng)\psi_\kkk^{-1}(\F(n))dn$.

The meaning of every symbol is retained until the end of the paper except if it is redefined.  

Let $n_1>n_2$ be two coprime positive integers. Let $s$ be the number of divisions in the Euclidean algorithm for the pair $(n_1,n_2)$ until we get remainder equal to $0$,  and for each $1\leq i\leq s$ let the $i$-th division be $n_i=n_{i+1}k_i+n_{i+2}$ where $k_i$ and $n_{i+2}$ are (the unique)  integers satisfying $k_i> 0$ and $n_{i+1}>n_{i+2}\geq 0$. We therefore have $n_{s+1}=1$ and $n_{s+2}=0$.

Let $P_1$ be the standard parabolic subgroup of $GL_{n_1+n_2}$ with Levi isomorphic to [and identified with] $ GL_{n_1}\times GL_{n_2}$ so that $GL_{n_1}$ appears in the upper left corner. Then for $1\leq i\leq k_1+...k_s$ we inductively define the $\kkk$-subgroups $V_i$ and $P_i$ of $GL_{n_1+n_2}$ and the AF $\F_{n_1,n_2}$ with domain $D_{\F_{n_1,n_2}}=\prod_{1\leq i\leq k_1+...+k_s}V_i$ by
requiring that:\begin{itemize}
	\item $V_i$ is the unipotent radical of $P_i$;
	\item $\F_{n_1,n_2}|_{V_i}$ is in the open orbit of the action by conjugation of $P_i$ on the AFs with domain $V_i$.
	\item $\{h\in P_i:h(\F_{n_1,n_2}|_{V_i})=\F_{n_1,n_2}|_{V_i}\}=P_{i+1}\rtimes V_i$. \end{itemize} 
\begin{prop}\label{Fex}Let $\phi$ be a $P_1(\A)$-automorphic function. The sum below is absolutely and uniformly (on $g$) convergent, and we denote by $f(\phi)$ the automorphic function on $P_1(\kkk)\s P_1(\A)$ it defines:  $$f(\phi)(g):=\sum_{\gamma\in(ZV)(\kkk)\s (GL_{n_1}\times  GL_{n_2})(\kkk)}\F_{n_1,n_2}(\phi)(\gamma g) \qquad\forall g\in P_1(\A)$$where $Z$ denotes the center of $GL_{n_1+n_2}$.\end{prop}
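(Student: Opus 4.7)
The plan is to prove the proposition by induction on the stage $i$ of the Euclidean algorithm, $1\le i\le k_1+\cdots+k_s$, performing at each stage a Fourier expansion along $V_i$ and restricting to the generic (open) orbit picked out by the third bullet of the setup. Starting from $\phi$, Fourier inversion on $V_1(\kkk)\s V_1(\A)$ decomposes $\phi$ as a constant term plus a sum over nontrivial characters, and the $P_1(\kkk)$-conjugation action organises these characters into orbits. The orbit of $\F_{n_1,n_2}|_{V_1}$ is by hypothesis the open one, with stabiliser $(P_2\rtimes V_1)(\kkk)$, hence parametrised by $(P_2V_1)(\kkk)\s P_1(\kkk)$. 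Restricting to this orbit produces a sum of the form $\sum_{\gamma_1}(\F_{n_1,n_2}|_{V_1})(\phi)(\gamma_1 g)$, whose summand $(\F_{n_1,n_2}|_{V_1})(\phi)$ is a $P_2(\A)$-automorphic function twisted by the character $\psi_\kkk^{-1}\F_{n_1,n_2}|_{V_1}$, so the same recipe applies next with $V_2$. Iterating $k_1+\cdots+k_s$ times, the nested coset quotients telescope into the single quotient $(ZV)(\kkk)\s(GL_{n_1}\times GL_{n_2})(\kkk)$, and the accumulated character equals $\psi_\kkk^{-1}\F_{n_1,n_2}$, so the resulting orbital sum is precisely $f(\phi)$.

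Granting convergence, the automorphic properties of $f(\phi)$ split as follows. Invariance under the Levi $(GL_{n_1}\times GL_{n_2})(\kkk)$ is the change of variable $\gamma\mapsto\gamma\gamma_0^{-1}$ in the quotient sum. Invariance under $V_1(\kkk)\subset P_1(\kkk)$ follows because for $v\in V_1(\kkk)$ and $\gamma$ in the Levi one has $\gamma v\gamma^{-1}\in V_1(\kkk)\subset D_{\F_{n_1,n_2}}(\kkk)$, and a standard change of variable in the integral defining $\F_{n_1,n_2}(\phi)$ shows that left translation by any $u\in D_{\F_{n_1,n_2}}(\kkk)$ costs only the factor $\psi_\kkk(\F_{n_1,n_2}(u))$, which equals $1$ since $\psi_\kkk$ is trivial on $\kkk$. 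Smoothness, $K\cap P_1(\A)$-finiteness, and uniform moderate growth then transfer from $\phi$ to $f(\phi)$ as soon as the sum converges uniformly on compacta.

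The main obstacle is precisely the absolute and uniform convergence, which I would attack in the style of Piateski-Shapiro and Shalika's proof of the Whittaker expansion on $GL_n$. The key input is that $\F_{n_1,n_2}(\phi)(g)$ has Schwartz-type decay in the Iwasawa coordinates of the Levi: integration by parts against the nontrivial character $\psi_\kkk^{-1}\F_{n_1,n_2}$, together with moderate growth and smoothness of $\phi$, yields rapid decay along the directions singled out by $\F_{n_1,n_2}$. On a Siegel domain for $GL_{n_1}\times GL_{n_2}$ the quotient $(ZV)(\kkk)\s(GL_{n_1}\times GL_{n_2})(\kkk)$ is parametrised by a lattice whose shape is dictated by the $n_i$'s, and Schwartz decay majorises the orbital sum by a convergent lattice sum, uniformly in $g$ over compacta of $P_1(\A)$. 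This is where the Euclidean-algorithm structure of the $V_i$'s enters nontrivially, as the $n_i$'s control both the geometry of the lattice and the directions along which decay is produced; the most delicate check is the compatibility of the orbit choices at consecutive stages $i$ and $i+1$, which must be arranged so that the telescoping of quotients reaches the stated sum and the Schwartz bounds at successive stages compound rather than interfere.
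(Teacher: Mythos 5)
Your orbit-by-orbit skeleton is exactly the paper's: expand along $V_1$, keep the open orbit, whose stabiliser $(P_2\rtimes V_1)(\kkk)$ is handed to you by the third bullet of the construction, and iterate so that the coset quotients telescope into $(ZV)(\kkk)\s(GL_{n_1}\times GL_{n_2})(\kkk)$. The paper packages the iteration as an induction on $n_1+n_2$ (one Fourier expansion over $V_1$, then the inductive hypothesis applied to $\F_{n_1,n_2}|_{V_1}(\phi)$ viewed as an automorphic function on the smaller group $P_2$), which is your induction on the stage $i$ in rolled-up form. Your second paragraph on left $P_1(\kkk)$-invariance is correct and is detail the paper omits.

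Where you diverge -- and where the proposal is not actually complete -- is the convergence. You declare absolute and uniform convergence to be ``the main obstacle'' and propose to obtain it from Schwartz-type decay of $\F_{n_1,n_2}(\phi)$ on a Siegel domain, majorising the orbital sum by a lattice sum. That is the hard route of the original Piateski-Shapiro/Shalika papers, and you only sketch it; the delicate points you flag (compounding of the decay estimates across stages, the geometry of the lattice) are precisely the ones left unexecuted. The idea you are missing is that no decay estimate is needed here: at each stage the orbital sum over $P_{i+1}(\kkk)V_i(\kkk)\s P_i(\kkk)$ is a sub-sum of the full Fourier expansion of a smooth automorphic function over the compact abelian quotient $V_i(\kkk)\s V_i(\A)$ (done root subgroup by root subgroup if $V_i$ is not abelian), and such a Fourier series converges absolutely and uniformly by smoothness alone. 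Absolute convergence of the nested sum then follows from Tonelli, and this is all the paper's two-line inductive proof uses. So either carry your decay estimates through in full, or -- better -- replace the third paragraph by this observation, which turns the convergence from the main obstacle into a triviality inherited at each inductive step.
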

\begin{proof}We proceed inductively on $n_1+n_2$. The Fourier expansion $\phi$ over $V_1$ takes the form $$\phi(g)=\left(\sum_{\gamma\in P_2(\kkk)\s (GL_{n_1}\times GL_{n_2})(\kkk)}\F_{n_1,n_2}|_{V_1}(\phi)(\gamma g)\right)+\text{other terms}. $$Since $\F_{n_1,n_2}|_{V_1}(\phi) $ is addressed by the inductive hypothesis, we are done.
\end{proof}
\begin{expectation}\label{ex}Let $\phi^\prime$ and $\phi$ respectivley be a $GL_{n_1}(\A)$-automorphic function and a $P_1(\A)$-automorphic function. Then for any complex number $t$ with sufficiently big real part, the two integrals below: are absolutely convergent, they are equal, and we denote them by $I$: \begin{multline}I:=\label{pmult}\int_{(GL_{n_1}\times GL_{n_2})(\kkk)Z(\A)\s (GL_{n_1}\times GL_{n_2})(\A)}\phi^\prime(g_1)f(\phi)\begin{pmatrix}g_1&\\&g_2\end{pmatrix}|\det(g_1)|^{n_2t}|\det(g_2)|^{-n_1t}dg_1dg_2=\\\int_{(V^1\times Z))(\A)\s GL_{n_1}(\A)\times GL_{n_2}(\A)} \F_{n_1-n_2,n_2}(\phi^\prime)(g_1)\F_{n_1,n_2}(\phi)\begin{pmatrix}g_1&\\&g_2\end{pmatrix}|\det(g_1)|^{n_2t}|\det(g_2)|^{-n_1t}dg_1dg_2\end{multline}where $V^1=\prod_{2\leq i\leq k_1+...+k_s}V_i,$ and $\F_{n_1-n_2,n_2}$ is defined in the same way as $\F_{n_1,n_2}$ by replacing $n_1$ with $n_1-n_2$ except if $k_1=1$, then we replace: $n_1$ with $n_2$, $n_2$ with $n_1-n_2$, and ``upper left corner" with ``lower right corner".
	\end{expectation} Note that equality (\ref{pmult}) is ``formally obtained" directly from the definition of $f(\phi)$. We fully skip in the present note to discuss (even with expectations) about analytic continuation in $t$ or in any other variable. 

Since the domain of integration in the second expression of $I$ is factorizable, to traditionally (in contrast to ``new way" integrals) search for choices of data that make $I$ factorizable and nonzero we ask: which choices of data make the two Fourier coefficients involved factorizable and nonzero. And of course---after letting all the data vary (in particular $n_1,n_2$)---this is the same as asking: for which $\phi$ the function $\F_{n_1,n_2}(\phi)$ is factorizable and nonzero.

We make use below of the concepts $\Omm{\F}$ and $\BBnk[n]$ which are defined in \cite{Tsiokos2} respectively in Definitions 8.1.1 and 6.8.  Definition 8.1.1 states $\Omm{\F}:=\{a\in\Om(\F):\mult{a}{\F}<\infty\}$ where $\Om(\F)$ consists of the ``minimal orbits attached" to $\F$  and $\mult{a}{\F}$ is a form of multiplicity of $a$ in $\F$. Through an easy refinement (on the ``only if" in (ii)) of Corollary 5.6 in \cite{Tsiokos2} (which is explained in Lemma \ref{lemco} below), for every $GL_n(\A)$-automorphic representation $\pi$ with (a) its elements admitting an (of course unique) absolutely convergent Eisenstein series expansion over discrete data and (b) the maximal orbit attached to it belonging in $\Omm{\F}$, we express $\F(\pi)$ as a finite sum (with $\mult{a}{\F}$ terms) of factorizable functions\footnote{In case $\F\in\BBnk[n]$ we have by Main corollary 6.17 that $\Omm{\F}=\{a\in\Om(\F):\mult{a}{\F}=1\}$; therefore the sum consist of only one term (and without modifying Corollary 5.6).}. The set $\BBnk[n]$ consists of AFs $\F$ for which a way is found in Main Corollary 6.17 in \cite{Tsiokos2} to shorten the calculation of $\Omm{\F}$. By closely following the construction of the Fourier expansion of the present note we directly obtain from the definition of $\BBnk[n_1+n_2] $ that $\F_{n_1,n_2}\in\BBnk[n_1+n_2]$. The set $\Omm{\F_{n_1,n_2}}$ has been calculated in the special case $s=2$ (recall this means that $n_1$ leaves residue $1$ when divided by $n_2$) in Theorem 8.3.12 in \cite{Tsiokos2}. Back to the general case, there is always at least one element:\begin{lem}\begin{equation}\label{orbit}[(k_1+...+k_s+1)^{n_{s+1}},(k_1+...+k_{s-1})^{n_s-n_{s+1}},...,k_1^{n_2-n_3}]\in\Omm{\F_{n_1,n_2}}.\end{equation}
\end{lem}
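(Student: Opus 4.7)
My plan is to argue by induction on $n_1+n_2$, mirroring the inductive construction of $\F_{n_1,n_2}$ used in the proof of Proposition \ref{Fex}. The base case is $s=1$: then $n_2=1$, $k_1=n_1$, and $\F_{n_1,1}$ is (conjugate to) the standard Whittaker character of $GL_{n_1+1}$, whose unique maximal orbit is the regular nilpotent one, with partition $[n_1+1]=[(k_1+1)^1]$, matching (\ref{orbit}). Alternatively, one may take $s=2$ as the base, using Theorem 8.3.12 of \cite{Tsiokos2}.

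For the inductive step I would Fourier-expand $\F_{n_1,n_2}$ over $V_1$ and distinguish the two cases built into the recursion of Expectation \ref{ex}. If $k_1\geq 2$, conjugation by a suitable element of $P_1(\kkk)$ identifies $\F_{n_1,n_2}|_{V_2\cdots V_{k_1+\cdots+k_s}}$ with $\F_{n_1-n_2,n_2}$ placed in an $n_1\times n_1$ block of $GL_{n_1+n_2}$; the Euclidean data of $(n_1-n_2,n_2)$ is $(k_1-1,k_2,\dots,k_s)$ with the same tail $n_3,\dots,n_{s+1}$, so by induction the partition $\mu=[((k_1-1)+k_2+\cdots+k_s+1)^{n_{s+1}},\dots,(k_1-1)^{n_2-n_3}]$ lies in $\Omm{\F_{n_1-n_2,n_2}}$. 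The reattachment of $V_1$ is, on the level of partitions, the addition of one column of height $n_2$, i.e.\ the addition of $1$ to each of its $n_2$ parts, which produces precisely (\ref{orbit}). If $k_1=1$, the first Fourier step instead flips to $\F_{n_2,n_1-n_2}$ (with $n_1-n_2<n_2$), whose inductive partition is $[(k_2+\cdots+k_s+1)^{n_{s+1}},\dots,k_2^{n_3-n_4}]$; the reassembly into $\F_{n_1,n_2}$ then increments each existing part by one and appends $n_2-n_3$ new parts equal to one. A direct bookkeeping with the recursion $n_j=k_j n_{j+1}+n_{j+2}$ verifies in both cases that the total is $n_1+n_2$, that the number of parts is $n_2$, and that the formula (\ref{orbit}) is reproduced exactly.

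The main obstacle is to justify rigorously that ``reattaching $V_1$'' has the advertised effect on the maximal attached orbit. For this I would invoke the machinery of \cite{Tsiokos2}: since $\F_{n_1,n_2}\in\BBnk[n_1+n_2]$ (as already observed immediately before the statement of the lemma), Main Corollary 6.17 gives a short description of $\Omm{\F_{n_1,n_2}}$, and by the footnote $\Omm{\F_{n_1,n_2}}=\{a\in\Om(\F_{n_1,n_2}):\mult{a}{\F_{n_1,n_2}}=1\}$. It therefore suffices to exhibit the partition (\ref{orbit}) as a minimal attached orbit of multiplicity one, and the fact that the single inductive Fourier step is built from an open-orbit condition ensures that it transports multiplicity-one orbits of $\F_{n_1-n_2,n_2}$ (respectively $\F_{n_2,n_1-n_2}$) to multiplicity-one orbits of $\F_{n_1,n_2}$ under the column-addition operation above. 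Carrying out this transport while tracking the stabiliser identity $\{h\in P_i:h(\F_{n_1,n_2}|_{V_i})=\F_{n_1,n_2}|_{V_i}\}=P_{i+1}\rtimes V_i$ through each step is the technical heart of the argument, and is where the description provided by Main Corollary 6.17 must be applied carefully rather than merely invoked.
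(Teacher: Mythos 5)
Your combinatorial bookkeeping is consistent (the column-addition recursion does reproduce the partition in (\ref{orbit}), which is the transpose of $[n_2^{k_1},n_3^{k_2},\dots,n_{s+1}^{k_s+1}]$), but the proposal has a genuine gap at exactly the point you yourself flag as ``the technical heart'': you never establish that the Fourier step over $V_1$ transports $\Omm{\F_{n_1-n_2,n_2}}$ (resp.\ $\Omm{\F_{n_2,n_1-n_2}}$) into $\Omm{\F_{n_1,n_2}}$ by adding a column of height $n_2$. That transport statement is essentially equivalent to the lemma itself: in general, pushing an attached orbit through one open-orbit Fourier coefficient yields only closure-order inequalities, not an equality of partitions, and nothing in the stabiliser identity $\{h\in P_i:h(\F|_{V_i})=\F|_{V_i}\}=P_{i+1}\rtimes V_i$ by itself upgrades this to the exact column-addition rule. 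Deferring the step to ``a careful application of Main Corollary 6.17'' leaves the proof unwritten, since that corollary is a criterion relating a single AF to a candidate orbit; it says nothing about how $\Omm{\cdot}$ behaves under the inductive reattachment of $V_1$.

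The paper avoids the induction entirely. It observes that the orbit in (\ref{orbit}) is the Richardson orbit meeting openly the unipotent radical $V$ of a parabolic with Levi $GL_{n_{s+1}}^{k_s+1}\times GL_{n_s}^{k_{s-1}}\times\cdots\times GL_{n_2}^{k_1}$, writes down an explicit nilpotent matrix $J=J_1+\cdots+J_{k_1+\cdots+k_s}$ lying in $\X{\F_{n_1,n_2}}$ (each $J_i$ obtained from the projection of $J_{\F_{n_1,n_2}}$ to $\Lie{V_i}$ by keeping a single off-diagonal entry), and then sandwiches dimensions: the $\X{...}$-version of Main Corollary 6.17 gives $\Di{\Om(J)}\geq2\Di{D_{\F_{n_1,n_2}}}$, while $J\in V$ gives $\Di{\Om(J)}\leq2\Di{V}=2\Di{D_{\F_{n_1,n_2}}}$. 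The resulting equality identifies $\Om(J)$ with the Richardson orbit of $V$, and Main Corollary 6.17 then yields membership in $\Omm{\F_{n_1,n_2}}$. If you want to salvage your inductive route you would have to prove the column-addition transport as a separate lemma; the explicit-representative-plus-dimension-count argument is precisely what replaces it.
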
\begin{tproof}Some basic results on algebraic groups and on nilpotent orbits are used freely. The Lie algebra of any algebraic group $H$ is denoted by $\Lie{H}.$ Notice that the orbit in the left hand side of (\ref{orbit}) is the Richardson orbit openly intersecting the unipotent radicals of the  $GL_{n_1+n_2}$-parabolics with Levi isomorphic to $GL_{n_{s+1}}^{k_s+1}\times GL_{n_s}^{k_{s-1}}\times... GL_{n_2}^{k_1}$. For any vector subspace $X$ of $\Lie{GL_{n_1+n_2}}$ we denote by $\Pr_{X}$ the projection (with respect to $(X,Y\rightarrow\mathrm{tr}(XY^t))$) of $\Lie{GL_{n_1+n_2}}$ to $X$. Without loss of generality, we choose $\F_{n_1,n_2}$ so that: for each matrix entry there is at most one $V_i$ containing matrices nontrivial in this entry; for every $1\leq i\leq k_1+...+k_s$ the matrix $\Pr_{\Lie{V_i}}(J_{\F_{n_1,n_2}})$ is  upper triangular and a Weyl group conjugate of a Jordan matrix. Let $J_i$
 be the matrix obtained from $\Pr_{\Lie{V_i}}(J_{\F_{n_1,n_2}})$  by replacing with zero  all the nonzero nondiagonal entries except the one among them lying in the smallest possible row. We define $J:=J_1+...J_{k_1+...k_s}$. Let $\Om(J)$ be the nilpotent orbit containing $J$. The variety $\X{...}$ is defined for example in the beginning of Section 6 in \cite{Tsiokos2}. Notice that: $J\in\X{\F_{n_1,n_2}}$, and hence by the ``$\X{...}$-version"\footnote{That is, the equivalent (due to Lemma 6.6) statement to Main corollary 6.17 which is obtained from it by replacing the second sentence with ``let $a\in\UUU{n}$ be such that $\X{\F}\cap a\not=\emptyset$". } of Main Corollary 6.17 in \cite{Tsiokos2} we obtain $\Di{\Om(J)}\geq2\Di{D_{\F_{n_1,n_2}}}$; for $V$ being a unipotent radical as in the first sentence of the proof for an appropriate order of the blocks we have $J\in V$, and hence $\Di{\Om(J)}\leq2\Di{V}$. By also noting $\Di{D_{\F_{n_1,n_2}}}=\Di{V}$, we obtain $\Di{\Om(J)}=2\Di{_{\F_{n_1,n_2}}}$ and hence we are done by Main corollary 6.17 in \cite{Tsiokos2}.
 \end{tproof} 

As in familiar integrals that are special cases of $I$, we can extend the topic of the previous paragraph by choosing $\phi$ among Fourier coefficient of automorphic functions on a  reductive group; we restrict our attention to the case that this reductive group is a general linear group, say $GL_N$, where $GL_{n_1+n_2} $ is identified as needed with a copy it admits inside $GL_N$.  Let $\Z$ be the AF corresponding\footnote{That is, $\phi=\Z(\phi^{\prime\prime})$ for a $GL_N(\A)$-automorphic function $\phi^{\prime\prime}$.} to a Fourier coefficient expression of a choice of  $\phi$, and let $\circ$ be the operation on AFs corresponding to composition of Fourier coefficients as it is precisely given in Definition 2.5.1 in \cite{Tsiokos2}. Then we need to study $\Omm{\F_{n_1,n_2}\circ\Z}$, and for many choices of $\Z$ satisfying $\F_{n_1,n_2}\circ\Z\in\BBnk[N]$,  Main corollary 6.17 in \cite{Tsiokos2} should again be useful.
\subsection*{Appendix}Below we adopt and use without mention notations from \cite{Tsiokos2}.\begin{lem}\label{lemco} Let $\pi$ and $\F\in\AAA[,n]$ and $\pi\in\Aut{\kkk,n,>}$. Assume that $\Om^\prime(\pi)\in\Om(\F)$ and that an $(\F\rightarrow\Pri[,n],GL_n,\kkk)$-tree has only finitely many, say $k$, output vertices with label in $\Pri[,n][\Om^\prime(\pi)]$. Then there is an $\phi\in\pi$ such that $\F(\phi)$ is the sum of $k$ factorizable functions.
\end{lem}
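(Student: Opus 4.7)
The plan is to apply Corollary 5.6 of \cite{Tsiokos2} and then refine its ``only if'' direction in (ii) so as to cut the expansion down to exactly $k$ factorizable summands. Starting from the root labelled by $\F$, each internal vertex of the $(\F\rightarrow\Pri[,n],GL_n,\kkk)$-tree performs a local Fourier expansion that splits the current coefficient into one child per summand, each child labelled by an element of $\Pri[,n]$ lying in some orbit $a\in\Om(\F)$. Iterating this expansion down to the leaves already writes $\F(\phi)$ as a finite sum over the output vertices, as in Corollary 5.6.

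Next I would single out the subset of output vertices whose label lies in $\Pri[,n][\Om^\prime(\pi)]$; by hypothesis this subset has cardinality $k$. The refinement needed is that every other output vertex contributes $0$: at such a vertex the label lies in $\Pri[,n][a]$ for some orbit $a\neq\Om^\prime(\pi)$, and since $\Om^\prime(\pi)$ is the maximal orbit attached to $\pi$, the corresponding Fourier coefficient extracts an orbit incompatible with the orbit support of $\pi$ and therefore vanishes identically on $\pi$. This is precisely the sharpening of the ``only if'' in (ii) of Corollary 5.6 that the discussion preceding the lemma alludes to.

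For each of the $k$ surviving vertices one has to produce a factorizable value. Here I would use the assumption that elements of $\pi$ admit an absolutely convergent Eisenstein series expansion over discrete data (encoded in $\pi\in\Aut{\kkk,n,>}$), and take $\phi$ to be a pure-tensor Eisenstein section built from factorizable data at every place. The coefficient associated to a vertex labelled by the maximal orbit $\Om^\prime(\pi)$ is then factorizable by the standard uniqueness/local-global argument invoked in \cite{Tsiokos2} for Fourier coefficients of ``top'' orbit type. Summing the $k$ nonzero pieces writes $\F(\phi)$ as a sum of $k$ factorizable functions, which is the conclusion.

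The main obstacle is the vanishing step: one must check that the Fourier coefficient at a vertex labelled by an orbit $a\neq\Om^\prime(\pi)$ vanishes identically on the chosen $\phi$, rather than merely being of lower order than the main term. I expect this to be forced by the very definition of the ``maximal orbit attached to $\pi$'' together with the tree's labelling convention from \cite{Tsiokos2}, but the careful matching of tree labels to orbit conditions on $\pi$ is exactly what turns Corollary 5.6 into the refined counting statement required here.
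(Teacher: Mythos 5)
There is a genuine gap: you treat the passage from the tree to the identity for $\F(\phi)$ as a purely formal, finite rearrangement (``iterating this expansion down to the leaves already writes $\F(\phi)$ as a finite sum over the output vertices''), but the $(\F\rightarrow\Pri[,n],GL_n,\kkk)$-tree is built not only from Fourier expansions but also from exchange-of-unipotent-period steps ($\exchange{Y}{X}$ followed by expansions $\e(V)$), and the hypothesis only bounds the number of output vertices whose label lies in the single orbit $\Om^\prime(\pi)$; the tree may have infinitely many output vertices in other orbits. To discard the terms that vanish on $\pi$ and retain exactly $k$ summands one must justify interchanging the outer unipotent integration with the (a priori infinite) sums created at each exchange step. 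That analytic justification is precisely the content of the paper's proof: it reduces everything to the proof of Corollary 5.6 of \cite{Tsiokos2} plus a Claim asserting the absolute convergence of $\int_{(Y\cap C)(\A)\s Y(\A)}\QQ(\phi)(n)\,dn$ for every $\QQ\in\e(V)(\exchange{Y}{X}(\F))$, proved by re-expanding $\F|_C(\phi)$ over $X$ and over conjugates of $V$ and then applying Fubini. Your write-up never addresses this convergence issue, and without it the ``refinement of the only if in (ii)'' that you invoke is exactly the statement to be proved, not a tool you may assume.

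Two secondary points. First, your vanishing step is asserted too loosely: a coefficient attached to an orbit $a\neq\Om^\prime(\pi)$ vanishes identically on $\pi$ only when $a$ is not dominated by $\Om^\prime(\pi)$; coefficients attached to strictly smaller orbits need not vanish. What saves the argument is the structural fact from \cite{Tsiokos2} that every output label dominates an element of $\Om(\F)$, combined with the hypothesis $\Om^\prime(\pi)\in\Om(\F)$ (a minimal attached orbit), and this bookkeeping is already part of Corollary 5.6 rather than the new content here. Second, the factorizability of the $k$ surviving terms does not need your ad hoc ``pure tensor Eisenstein section'' argument; it is likewise built into Corollary 5.6, which is why the paper can cite that proof verbatim and only add the absolute-convergence Claim --- the ingredient your proposal is missing.
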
\begin{tproof}The proof is obtained directly from the proof of Corollary 5.6 in \cite{Tsiokos2} and the following claim:

\vsp 
\noindent\textbf{Claim.}\textit{ Let $\F\in\AAA[,n]$. Consider operations $\exchange{Y}{X}$ and $\e(V)$ (defined with the current choices of $\kkk$ and $n$), such that $\e(V)(\exchange{Y}{X}(\F))$ is defined. Then, for any AF $\QQ\in\e(V)(\exchange{Y}{X}(\F))$ the integral \begin{equation}\label{rint}\int_{(Y\cap C)(\A)\s Y(\A)}\QQ(\phi)(n)dn \end{equation} is absolutely convergent, where $C$ is as in Definition 2.5.11 in \cite{Tsiokos2} (for the current choices of $X,Y,\F$).}

\vsp
\noindent\textit{Proof of Claim. }$\F(\phi)(1)=\int_{Y(\kkk)(Y\cap C)(\A)\s Y(\A)}\F|_C(\phi)(y)dy. $ We apply to $F|_C(\phi)$ the Fourier expansion over $X(\kkk)(X\cap C)(\A)\s X(\A)$, and then to each term $\Z$ we apply the Fourier expansion over $vVv^{-1}(\kkk)(v Vv^{-1}\cap XC)(\A)\s vVv^{-1}(\A)$ where $v$ is the element in $Y(\kkk)$ satisfying $v^{-1}\Z =\exchange{X}{Y}(\F)$. Among the terms obtained consider the ones of the form $v\QQ(\phi)$ for any $v$ as previously; then we integrate over $\int_{Y(\kkk)(Y\cap C)(\A)\s Y(\A)}$ the sum of these terms, and by using Fubini's theorem we obtain the absolute convergence of the integral in (\ref{rint}).$\hspace*{134pt}\square$Claim
\end{tproof}

\end{document}